\newlength{\noteWidth}
\long\def\notes#1{\ifinner
           {\footnotesize #1}
           \else
           \marginpar{\parbox[t]{\noteWidth}{\raggedright\footnotesize #1}}
       \fi\typeout{#1}}
\def\notes#1{\typeout{read notes: #1}}  
\def\urls#1{{\small \url{#1}}}
\def\util{{\mathcal{U}}}
\def\meanutil{\mbox{\scriptsize$\widebar{\cal U}$}}
\def\cKL{c_{\text{\tiny KL}}}
\def\reward{w}
\newdimen\rh@wd
\newdimen\rh@hta
\newdimen\rh@htb
\newbox\rh@box
\def\rh@measure#1{\setbox\rh@box=\hbox{$#1$}\rh@wd=\wd\rh@box \rh@hta=\ht\rh@box}
\def\widecheck#1{\rh@measure{#1}%
  \setbox\rh@box=\hbox{$\widehat{\vrule height \rh@hta width\z@ \kern\rh@wd}$}%
  \rh@htb=\ht\rh@box \advance\rh@htb\rh@hta \advance\rh@htb\p@
  \ooalign{$\vrule height \ht\rh@box width\z@ #1$\cr
           \raise\rh@htb\hbox{\scalebox{1}[-1]{\box\rh@box}}\cr}}
\def\xz{x^\circ}
\def\EFn#1{\Lambda_{#1}}
\def\preclH{{\cal H}^\circ}
\def\preH{H^\circ}  
\def\sq{\hbox{\rlap{$\sqcap$}$\sqcup$}}
\def\qed{\ifmmode\sq\else{\unskip\nobreak\hfil
\penalty50\hskip1em\null\nobreak\hfil\sq
\parfillskip=0pt\finalhyphendemerits=0\endgraf}\fi\medskip}
\long\def\defbox#1{\framebox[.9\hsize][c]{\parbox{.85\hsize}{%
\parindent=0pt
\baselineskip=12pt plus .1pt      
\parskip=6pt plus 1.5pt minus 1pt 
 #1}}}
\long\def\beginbox#1\endbox{\subsection*{}%
\hbox{\hspace{.05\hsize}\defbox{\medskip#1\bigskip}}%
\subsection*{}}
\def\endbox{}
\newsavebox{\junk}
\savebox{\junk}[1.6mm]{\hbox{$|\!|\!|$}}
\def\liminf{\mathop{\rm lim\ inf}}
\def\argmax{\mathop{\rm arg\, max}}
\def\U{{\sf U}}
\def\state{{\sf X}}
\def\stateu{{\sf X}_{\sf u}}
\def\staten{{\sf X}_{\sf n}}
\newcommand{\field}[1]{\mathbb{#1}}
\def\Re{\field{R}}
\def\ind{\field{I}}
\def\ZZ{\field{Z}}
\def\cpi{\check{\pi}} 
\def\cP{{\check{P}}}
\def\cZ{{\check{Z}}}
\def\bfmath#1{{\mathchoice{\mbox{\boldmath$#1$}}%
{\mbox{\boldmath$#1$}}%
{\mbox{\boldmath$\scriptstyle#1$}}%
{\mbox{\boldmath$\scriptscriptstyle#1$}}}}
\def\bfmN{\bfmath{N}}
\def\bfmU{\bfmath{U}}
\def\haclW{\widehat{\cal W}}
\def\bfmX{\bfmath{X}}
\def\bfmY{\bfmath{Y}}
\def\bfmhhaY{\bfmath{\hhaY}} 
\def\bfmhhaY{\hbox to 0pt{$\widehat{\bfmY}$\hss}\widehat{\phantom{\raise 1.25pt\hbox{$\bfmY$}}}}
\def\bfmW{\bfmath{W}}
\def\haP{{\widehat P}}
\def\til={{\widetilde =}}
\def\clV{{\cal V}}
\def\clW{{\cal W}}
 \def\FRAC#1#2#3{\genfrac{}{}{}{#1}{#2}{#3}}
\def\ddtp{{\mathchoice{\FRAC{1}{d^{\hbox to 2pt{\rm\tiny +\hss}}}{dt}}%
{\FRAC{1}{d^{\hbox to 2pt{\rm\tiny +\hss}}}{dt}}%
{\FRAC{3}{d^{\hbox to 2pt{\rm\tiny +\hss}}}{dt}}%
{\FRAC{3}{d^{\hbox to 2pt{\rm\tiny +\hss}}}{dt}}}}
\def\ddzeta{{\mathchoice{\FRAC{1}{d}{d\zeta}}%
{\FRAC{1}{d}{d\zeta}}%
{\FRAC{3}{d}{d\zeta}}%
{\FRAC{3}{d}{d\zeta}}}}
\def\ddzetap{{\mathchoice{\FRAC{1}{d^+}{d\zeta}}%
{\FRAC{1}{d^+}{d\zeta}}%
{\FRAC{3}{d^+}{d\zeta}}%
{\FRAC{3}{d^+}{d\zeta}}}}
\def\half{{\mathchoice{\FRAC{1}{1}{2}}%
{\FRAC{1}{1}{2}}%
{\FRAC{3}{1}{2}}%
{\FRAC{3}{1}{2}}}}
\def\eqdef{\mathbin{:=}}
\def\Prob{{\sf P}}
\def\Expect{{\sf E}}
\def\average#1,#2,{{1\over #2} \sum_{#1}^{#2}}
\def\eye(#1){{\bf(#1)}\quad}
\def\varble{\,\cdot\,}
\newtheorem{theorem}{Theorem}[section]
\newtheorem{proposition}[theorem]{Proposition}
\def\Prop#1{Prop.~\ref{#1}}
\def\Theorem#1{Theorem~\ref{#1}}
\def\Section#1{Section~\ref{#1}}
\newcounter{rmnumx}
\newenvironment{romannumx}{\begin{list}{{\upshape (\roman{rmnumx})}}{\usecounter{rmnumx}
\setlength{\leftmargin}{2pt}
\setlength{\rightmargin}{4pt}
\setlength{\itemsep}{3pt}
\setlength{\itemindent}{18pt}
}}{\end{list}}
\newcounter{anum}
\def\Ebox#1#2{%
\begin{center}
\includegraphics[width= #1\hsize]{#2} \end{center}}
\def\Fig#1{Fig.~\ref{#1}}
\def\ind{\field{I}}
\def\Re{\field{R}}
  \title{Action-Constrained Markov Decision Processes 
  \\
  With Kullback--Leibler Cost\thanks{Funding from the ANR under grant ANR-16-CE05-0008, and NSF under awards EPCN~1609131, CPS~1646229 is gratefully acknowledged.}
}
\author{Ana Bu\v{s}i\'{c}\thanks{Inria and DI ENS, \'Ecole normale sup\'erieure, CNRS, PSL Research University, Paris, France
    ({ana.busic@inria.fr}, \url{http://www.di.ens.fr/\string~busic/}).}
  \and
  Sean Meyn\thanks{Department of Electrical and Computer
Engineering at the University of Florida, Gainesville ({meyn@ece.ufl.edu}, \url{http://www.meyn.ece.ufl.edu/}).} 
}
\begin{document}

\maketitle

\begin{abstract}

This paper concerns computation of optimal policies in which the one-step reward function contains a cost term that models Kullback-Leibler divergence with respect to nominal dynamics. This technique was introduced by Todorov in 2007, where it was shown under general conditions that the solution to the average-reward optimality equations reduce to a simple eigenvector problem. Since then many authors have sought to apply this technique to control problems and models of bounded rationality in economics. 

A crucial assumption is that the input process is essentially unconstrained. For example, if the nominal dynamics include randomness from nature (e.g., the impact of wind on a moving vehicle), then the optimal control solution does not respect the exogenous nature of this disturbance. 

This paper introduces a technique to solve a more general class of action-constrained MDPs. The main idea is to solve an entire parameterized family of MDPs, in which the parameter is a scalar weighting the one-step  reward function. The approach is new and practical even in the original unconstrained formulation.

%
%
%
%
%
%

\end{abstract}

\smallbreak

\paragraph*{Keywords:}  
 Markov decision processes,  Computational methods.

\section{Introduction}

\label{s:intro}

Consider a Markov Decision Process (MDP) with finite state space $\state$, general action space $\U$,  and  one-step reward  function $\reward\colon\state\times\U\to\Re$.   Two standard optimal control criteria are \textit{finite-horizon}:
\begin{equation}
\clW^*_T(x) = \max \sum_{t=0}^{T}  \Expect[  \reward(X(t),U(t))  \mid X(0)=x ]
\label{e:TCobjectiveReward_intro}
\end{equation}
where $T\ge 0$ is fixed,   and \textit{average reward}:
\begin{equation} 
\eta^*(x) =\max\Bigl\{ \liminf_{T\to\infty}\frac{1}{T}   \sum_{t=0}^{T-1}  \Expect[  \reward(X(t),U(t))    \mid X(0)=x ]  \Bigr\}\,.
\label{e:ARobjectiveReward_intro}
\end{equation}
where $\bfmX=\{X(t): t\ge 0\}$, $\bfmU=\{U(t): t\ge 0\}$ denote the state and input sequences.

In either case, the maximum is over all  admissible input sequences; it is obtained as deterministic state feedback under general conditions. In the average-reward framework the optimal policy is typically stationary:  $U(t) = \phi^*(X(t))$ for a mapping $\phi^*\colon\state\to\U$,  and $\eta^*(x) $ does not depend upon the initial condition $x$   (see \cite{put14,bershr96a}).   

A special class of MDP models was introduced by~\cite{tod07}, for which either   optimal control problem has an attractive solution.  The reward   function is assumed to be the sum of two terms: 
\begin{equation*}
\begin{aligned}
\reward(x,\mu)  &= 
\util(x) 
		-  D(\mu\| P_0(x,\varble)).
\end{aligned}
\label{e:AROEcostP}
\end{equation*} 
The first term is a function   $\util\colon\state\to\Re$ that is completely unstructured.   The second is a ``control cost'',  defined using   Kullback--Leibler  (K-L) divergence (also known as \textit{relative entropy}).   The control cost is based on deviation from nominal (control-free) behavior;  modeled by a nominal transition matrix $P_0$:
\begin{equation*}
D(\mu\| P_0(x,\varble))\eqdef
\sum_{x'}  \mu(x') \log \Bigl(\frac{\mu(x') }{P_0(x,x')} \Bigr).
\end{equation*}

It is shown that the solution with respect to the average reward criterion is obtained as the solution to the following eigenvector problem:  let $(\lambda,v)$ denote the Perron-Frobenius eigenvalue-eigenvector pair for the positive matrix with entries  $\haP(x,x') = \exp(\util(x)) P_0(x,x')$,  $x,x'\in\state$.
The eigenvector property
$
\haP v = \lambda v
$ implies that the ``twisted'' matrix 
\begin{equation}
\cP(x,x') 
=
\frac{1}{\lambda}
\frac{v(x')}{v(x)}
\haP(x,x')  \,, \quad x,x'\in\state\, .
\label{e:twistedP}
\end{equation}
is a transition matrix on $\state$.  This transition matrix defines the dynamics of the model under optimal control.  A similar model was introduced in the earlier work of  \cite{kar96}, but without the complete solution reviewed here. 

%

\begin{wrapfigure}{l}{.28\hsize}
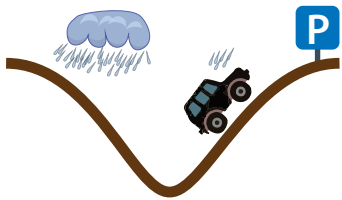

\vspace{-.5cm}
\Ebox{.95}{TodCar.pdf}
\vspace{-.5cm}\caption{\small \small
Optimal hill climb}
\label{f:TodCar} 
\vspace{-.25cm}
\end{wrapfigure}

Since the publication of \cite{tod07} there has been significant theoretical advancement, with proposed applications to economics \cite{guaragwil14}, distributed control \cite{meybarbusyueehr15}, and neuroscience \cite{doy09}.

It is appealing to imagine that rational economic agents are solving an eigenvector problem to maximize their utility.  However, a careful look at the controlled dynamics \eqref{e:twistedP} suggests a limitation of this MDP formulation:  \textit{how can this transformation respect exogenous disturbances from nature?}  
An essential assumption in this prior work is that for each $x$,  and any pmf $\mu$,  it is possible to choose the action so that $P(x,x') = \mu(x')$.  This is equivalent to the assumption that  the action space 
$\U$ consists of all probability mass functions on $\state$, and the controlled transition matrix is entirely determined by the input as follows:
\begin{equation}
\Prob\{X(t+1)=x'\mid X(t) =x, U(t) = \mu\} = \mu(x')\,,\qquad x,x'\in\state,\ \mu\in\U\, .
\label{e:TodMDPu}
\end{equation}
This modeling assumption presents a significant limitation, as pointed out in \cite{tod09}:  ``\textit{It prevents us from modeling systems subject to disturbances outside the actuation space}''.

\Fig{f:TodCar} is based on an example of \cite{tod09}.  Reaching the parking spot at the top of the hill in minimum time (or  minimal fuel) is formulated as a total \textit{cost} problem, similar to \eqref{e:TCobjectiveReward_intro}.   
The figure has been modified to indicate that wind and rain influence the behavior of the car on the track.    The optimal solution cannot take the form \eqref{e:twistedP} when this additional randomness is included in the model, since this would mean our control action would modify the weather.

\paragraph{Contributions}

In this paper the K-L cost framework is broadened to include constraints on the pmf $\mu$ appearing in \eqref{e:TodMDPu}.  
The new approach to computation   is based on the solution of an entire family of MDP problems,  parameterized by a scalar $\zeta$ appearing as a weighting factor in the one-step reward function.  Letting $X(t)$ denote the state, and $R(t)$ denote the randomized policy at time $t$, this one-step reward is of the form
\begin{equation}
\reward(X(t),R(t)) = \zeta \util(X(t)) -  \cKL(X(t),R(t) ) 
\label{e:oneStepReward}
\end{equation}
in which $\cKL$ denotes relative entropy with respect to nominal dynamics (see \eqref{e:cKL}).   

The main results of the paper are contained in Theorems~\ref{t:TCODE}  and \ref{t:IDPODE}, 
with parallel results for the total- and average-reward control problems.   In each case, it is shown that   \textit{the solution to an entire family of MDPs can be obtained through the solution of a single ordinary differential equation} (ODE).   

The ODE solution is most elegant in the average-reward setting.  For each $\zeta$, the solution to the average-reward optimization problem is based on a relative value function $h^*_\zeta\colon\state\to\Re$.  For the MDP with $d$ states, each function is viewed as a vector in $\Re^d$ with entries $\{h_\zeta^*(x^i) : 1\le i\le d\}$.  A vector field $\clV\colon\Re^d\to\Re^d$ is constructed so that these functions solve the ODE 
\[
\ddzeta h^*_\zeta  =  \clV(h^*_\zeta)\,,
\qquad\text{
with boundary condition $h^*_0\equiv 0$.
 }
 \]

One step in the construction of $\clV$ is differentiating each side of the dynamic programming equations; a starting point of the 50 year old sensitivity theory of  \cite{sch68}, and  more recent \cite{sutmcasinman99}.
  More closely related is the sensitivity theory surrounding Perron-Frobenius eigenvectors that appears in the theory of large deviations \cite[Prop.~4.9]{konmey03a}.
 The goals of this prior work are different, and we are not aware of comparable algorithms that simultaneously solve the family of control problems.

  The optimal control formulation is far more general than in the aforementioned work \cite{tod07,guaragwil14,meybarbusyueehr15}, as it allows for inclusion of exogenous randomness in the MDP model.   The dynamic programming equations become significantly  more complex in this generality, so that in particular, the Perron-Frobenious computational approach used in  prior work is no longer applicable.


In addition to its value as a computational tool,  there is a significant benefit to solve the entire collection of optimal control problems for a range of the parameter $\zeta$.  For example, this provides a means to understand the tradeoff between state cost and control effort.     Simultaneous computation of the optimal policies is also an essential ingredient of the distributed control architecture introduced in  \cite{meybarbusyueehr15}.   

The ODE algorithm
is easily implemented for  problems of moderate size.   In this paper an example is provided in which the the size of the state space $d$ is greater than 1,000;   the action space is an open subset $\Re^{d-1}$ since actions correspond to randomized decision rules.   The optimal solutions for the desired range of $\zeta$ were obtained in less than one hour using a standard laptop running Matlab.


The remainder of the paper is organized as follows.   
\Section{s:design} describes the new Kullback--Leibler cost criterion and numerical techniques for the MDP solutions.    This is applied to a path-finding problem in \Section{s:drone}.
Conclusions and topics for future research are contained in \Section{s:conc}.

\section{MDPs with Kullback--Leibler Cost} 
\label{s:design}

\subsection{MDP model}   

The dynamics of the MDP are assumed of the form \eqref{e:TodMDPu},  where the action space consists of a convex subset of probability mass functions (pmf) on $\state$.  An explanation of the one-step reward  \eqref{e:oneStepReward} will be provided after a few preliminaries.

A transition matrix $P_0$ is given that describes  nominal (control-free) behavior.   It is assumed to be \textit{irreducible and aperiodic}.   It follows that $P_0$ admits a unique invariant pmf, denoted $\pi_0$.  For any other transition matrix, with unique invariant pmf $\pi$, the
 \textit{Donsker-Varadhan rate function} is denoted,
\begin{equation}
K(P\| P_0) = \sum_{x,x'} \pi(x) P(x,x')   \log \Bigl(\frac{P(x,x') }{P_0(x,x')} \Bigr)
\label{e:DVrate}
\end{equation}  
under the usual convention that ``$0\log(0) = 0$''.  
It is called a ``rate function'' because it defines the relative entropy rate between two stationary Markov chains, see
\cite{demzei98a}.


As in \cite{tod07,guaragwil14,meybarbusyueehr15}, the rate function is used here 
to model the cost of deviation from the nominal transition matrix $P_0$. 
The two control objectives surveyed in the introduction will be specialized as follows, based on the utility function    $\util\colon\state\to\Re$ and a scaling parameter $\zeta\ge 0$.
For  the finite-horizon optimal control problem,  

\begin{equation}
\clW^*_T(x,\zeta) = \max \sum_{t=0}^{T} \Expect_x[ \zeta\util(X(t))   - \cKL(X(t),P(t)) ] \,, 
\label{e:TCobjectiveReward}
\end{equation}
where   
the expectation is conditional on $X(0)=x$, and 
\begin{equation}
\cKL(x, P) = D(P(x,\varble)\| P_0(x,\varble))\eqdef
\sum_{x'}  P(x,x') \log \Bigl(\frac{P(x,x') }{P_0(x,x')} \Bigr)
\label{e:cKL}
\end{equation}
for any $x\in\state$  and transition matrix $P$.

The average reward optimization problem is analogous:
\begin{equation} 
\eta^*(\zeta) =\max\Bigl( \liminf_{T\to\infty}\frac{1}{T} \sum_{t=0}^{T-1} \Expect_x \left [ \zeta \util(X(t))   - \cKL(X(t),P(t)) \right ]   \Bigr)\,.
\label{e:ARobjectiveReward_def}
\end{equation}
In each case, the  maximum is over all transition matrices $\{P(t)\}$. 
The average reward optimization problem can be cast as
the solution to the convex optimization problem,
\begin{equation}
\eta^*_\zeta =\max_{\pi,P} \bigl \{\zeta \pi(\util) -  K(P\| P_0)  :  \pi   P  = \pi \bigr\}
\label{e:ARobjectiveReward}
\end{equation}
where  the maximum is over all transition matrices.    

In this context, the one-step reward   appearing in (\ref{e:TCobjectiveReward_intro}, \ref{e:ARobjectiveReward_intro}) is a function of pairs $(x,P)$:
\begin{equation}  
 \reward(x,P)  \eqdef \zeta \util(x) -   \cKL(x, P)
\label{e:cKLa}
\end{equation}
for any $x\in\state$  and transition matrix $P$.
There is practical value to considering a parameterized family of reward functions.  For one, it is useful to understand the sensitivity of the control solution to the relative weight given to utility    and the penalty on control action.   This is well understood in classical linear control theory -- consider for example the celebrated symmetric root locus in linear optimal control \cite{fraworpow97}.


\paragraph*{Nature \&\ nurture}   

Exogenous randomness from nature imposes additional constraints in the optimal control problem  \eqref{e:TCobjectiveReward} or \eqref{e:ARobjectiveReward_def}.   

It is assumed that the state space is the cartesian product of two finite sets:  $\state= \stateu\times\staten$, and the state is similarly expressed $X(t)=(X_u(t), X_n(t))$.   At a given time $t$ it is assumed that $X_n(t+1)$ is conditionally independent of the input at time $t$,  given the value of $X(t)$.   This is formalized by the  following conditional-independence assumption: 
\begin{equation}
P(x,x') = R(x, x_u') Q_0(x,x_n') ,\quad x=(x_u,x_n) \in\state, \ x_u'\in\stateu,\ x_n'\in\staten 
\label{e:PQ0R}
\end{equation} 
The matrix $R$ defines the randomized decision rule for $X_u(t+1)$ given $X(t)$.  
The matrix $Q_0$ is fixed and models the distribution of   $X_n(t+1)$ given $X(t)=x$,
 and each are subject to the pmf constraint:
$
\sum_{x_u'} R(x, x_u')=\sum_{x_n'}  Q_0(x,x_n') =1
$
for each $x$. 

Subject to the constraint \eqref{e:PQ0R}, the two optimal control problems (\ref{e:ARobjectiveReward_def}, \ref{e:cKLa}) are transformed to  the final forms considered in this paper:
\begin{eqnarray}
\clW^*_T(x,\zeta) &=& \max \sum_{t=0}^T  \Expect_x[  \reward(X(t),R(t))   ]
\label{e:TCobjective_NN}
\\[.2em]
 \eta^*(\zeta) &=&\max\Bigl\{ \liminf_{T\to\infty}   \frac{1}{T}  \sum_{t=0}^{T-1}   \Expect_x[  \reward(X(t),R(t)) ]  \Bigr\}
 \label{e:ACobjective_NN}
\end{eqnarray}
where in each case the maximum is over sequences of randomized decision rules $\{R(0),\dots,R(T)\}$,     
\begin{equation} 
\begin{aligned}
 \reward(x,R)  &\eqdef \zeta \util(x) -  \cKL(x,R) 
 \\[.2em]
 \text{\it and}\quad 
\cKL(x,R)& \eqdef \sum_{x'}  P(x,x') \log \Bigl(\frac{P(x,x') }{P_0(x,x')} \Bigr)  
 =   \sum_{x'_u}  R(x,x'_u) \log \Bigl(\frac{R(x,x'_u) }{R_0(x,x_u')} \Bigr)   
\end{aligned}
\label{e:cKL}
\end{equation}

\subsection{Notation}

For any transition matrix $P$, an invariant pmf  is interpreted as a row vector, so that invariance can be expressed $\pi P=\pi$.   Any function $f\colon\state\to\Re$ is interpreted as a $d$-dimensional column vector,  and we use the standard notation $Pf\, (x) =   \sum_{x'}P(x,x')f(x')$,  $x\in\state$.
The \textit{fundamental matrix} is   the inverse, 
\begin{equation}
Z = [I - P + 1\otimes \pi]^{-1}
\label{e:fundKernGen}
\end{equation}
where $1\otimes \pi$ is a matrix in which each row is identical, and equal to $ \pi$.  
If $P$ is irreducible and aperiodic, then it can be expressed as the power series  
$Z = \sum_{n=0}^\infty  [P - 1\otimes \pi]^n
$,
with $ [P - 1\otimes \pi]^0 \eqdef I$  (the $d\times d$ identity matrix),
and $ [P - 1\otimes \pi]^n  = P^n - 1\otimes \pi$  for $n\ge 1$.  

Any function $g\colon\state\times\state\to\Re$ is regarded as an unnormalized log-likelihood ratio:  Denote for $x,x'\in\state$,
\begin{equation}
P_g(x,x') \eqdef P_0(x,x')\exp\bigl(  g( x' \mid x )  -  \EFn{g}(x)    \bigr),  
\label{e:Ph-a}
\end{equation}
in which $g( x' \mid x )$ is the value of $g$ at $(x,x')\in\state\times\state$, and $ \EFn{g}(x)$ is the normalization constant,
\begin{equation}
    \EFn{g}(x)     
\eqdef  \log\Bigl( \sum_{x'} P_0(x,x')\exp\bigl(  g(x'\mid  x)     \bigr) \Bigr)
\label{e:EFn}
\end{equation}
The rate function can be expressed in terms of  its  invariant pmf $\pi_g$, the bivariate pmf  $\Pi_g(x,x') = \pi_g(x) P_g(x,x')  $, and the log moment generating function \eqref{e:EFn}:
\begin{equation}
\begin{aligned}
K(P_g\| P_0) &= \sum_{x,x'} \Pi_g(x,x')    \bigl[  g( x' \mid x )  -  \EFn{g}(x) \bigr]
\\
&= \sum_{x,x'}\Pi_g(x,x')      g( x' \mid x )  -     \sum_x \pi_g(x) \EFn{g}(x)  
\end{aligned}
\label{e:DVrate-h}
\end{equation}  

The unusual notation is introduced because  $ g( x' \mid x )  $ will take the form of a conditional expectation in all of the results that follow:  given any function $h\colon\state\to \Re$ we denote
\begin{equation}
h(x'_u\mid  x)  =  \sum_{x_n'} Q_0(x,x_n') h(x_u',x_n') \, .
\label{e:hmid}
\end{equation}
In this case the transformation only transforms the dynamics of $\bfmX_u$:
\[
P_h(x,x')  = R_h(x,x_u') Q_0(x,x_n') \,,\quad
R_h(x,x'_u) \eqdef R_0(x,x'_u)\exp\bigl( h( x'_u \mid x )  -  \EFn{g}(x)    \bigr)\, .
\]

\subsection{ODE for finite time horizon}
\label{s:ODETC}

Here an ODE is constructed to compute the   value functions $\{\clW^*_\tau (x,\zeta)  : 1\le \tau \le T\,, \zeta\ge 0\}$.    To aide exposition it is helpful to first look at the general problem:   Assume that the state space $\state$ is finite, the action space $\U$ is \textit{general},  and let $\{P_u(x,x')\}$ denote the controlled transition matrix.   The  one-step reward on state-action pairs is of the form $\reward(x,u) = \zeta\util(x) - c(x,u)$,  where $c\colon\state\times\U\to\Re_+$.   Assume that  $c(x,u)\equiv 0$  for a unique value $u=u_0$.

For each $1\le \tau \le T$ denote, as in \eqref{e:TCobjectiveReward_intro},
\begin{equation}
\clW^*_\tau(x,\zeta) = \max \sum_{t=0}^\tau  \Expect_x[ w(X(t), U(t)) ]     
\label{e:TCobjectiveReward_gen}
\end{equation}  
where the maximum is over all admissible  
inputs $\{U(t)=\phi_t(X(0),\dots,X(t))\}$.
Each value function can be regarded as the maximum over functions   $\{\phi_t\}$  (subject to measurability conditions and 
hard constraints on the input).  It is assumed that the maximum \eqref{e:TCobjectiveReward_gen} is finite for each $(x ,\zeta)$. 
 
The dynamic programming equation (principle of optimality) holds:  for $\tau\ge 1$,
\begin{equation}
\clW^*_\tau(x,\zeta) = \max_u \Bigl\{  \zeta\util(x)  -  c(x,u)     + \sum_{x'} P_u(x,x') \clW^*_{\tau-1} (x')  \Bigr\}
\label{e:TCobjectiveReward_gen_PoO}
\end{equation}  
Assume that a maximizer  $\phi^*_{\tau-1,\zeta}(x)$ exits for each $\tau$,$\zeta$, and $x$.

A crucial observation is that for each $x$, the value function appearing in \eqref{e:TCobjectiveReward_gen} is the maximum
of functions that are affine in $\zeta$.  It follows that $\clW^*_\tau(x,\zeta) $ is convex as a function of $\zeta$, and hence absolutely continuous.  Consequently, the right derivative   $H^*_\tau(x,\zeta) \eqdef \ddzetap  \clW^*_\tau(x,\zeta) $ exists everywhere.  A recursive equation follows from 
\eqref{e:TCobjectiveReward_gen_PoO}:
\begin{equation}
H^*_\tau(x,\zeta) = \util(x)       + \sum_{x'} \cP_{\tau-1,\zeta}(x,x') H^*_{\tau-1} (x',\zeta)  
\label{e:TCobjectiveReward_ODEa}
\end{equation}  
where $\cP_{\tau-1,\zeta}(x,x') = P_{u^*}(x,x')$ with $u^* =  \phi^*_{\tau-1,\zeta}(x)$.   

In matrix notation this becomes $H^*_\tau =  \cZ_{\tau-1,\zeta} \util$, where
 $\cZ_{0,\zeta} =I$, and
 for any $1\le \tau\le T$,
\begin{equation}
\cZ_{\tau-1,\zeta} = I +  \cP_{\tau-1,\zeta}+\cP_{\tau-1,\zeta}\cP_{\tau-2,\zeta}
			+ \cP_{\tau-1,\zeta}\cP_{\tau-2,\zeta} \cdots \cP_{0,\zeta}
\label{e:fund_matrix_finite}
\end{equation}
 This is similar to a truncation of the  power series representation of the fundamental matrix \eqref{e:fundKernGen}.

Denote $\clW^*_\zeta(x) =\{ \clW^*_k(x,\zeta)  : 0\le k\le T\}$, regarded as a vector in $\Re^{|\state|\times(T+1)}$, parameterized by the non-negative constant $\zeta$.  The following result follows from the preceding arguments: 
\begin{theorem}
\label{t:TCODE}  
The family of functions $\{\clW^*_\zeta\}$ solves the ODE
$\displaystyle 
\ddzetap \clW^*_\zeta =\clV( \clW^*_\zeta )$,  $ \zeta\ge 0$,  
with boundary condition $\clW^*_0 = 0$.   The vector field can be described in block-form as follows, with $T+1$ blocks:
\[
\ddzetap \clW^*_k(\varble,\zeta)  =\clV_k( \clW^*_\zeta )\,,\quad 0\le k\le T\, .
\]
The identity $\clV_0(\clW) = \util$ holds for any $\clW$.
For $k\ge 1$, the right hand side depends on  its argument only through the associated policy:  for any sequence of functions $\clW =(\clW_0,\dots,\clW_T)$,
\[
\begin{aligned}
\clV_k(\clW)  &=  Z_{k-1} \util 
\\
\text{where} \qquad
  Z_{k-1} &= 
 I +  P_{k-1}+P_{k-1}P_{k-2}
			+ P_{k-1}P_{k-2}  \cdots P_0 	 
			\\
			P_i(x,x') &= 	P_{\phi_{i}(x)} (x,x')\,, \quad \text{all $i$, $x$, $x'$,} 
\\
\phi_i(x)  &=  \argmax_u   \Bigl\{  -  c(x,u)     + \sum_{x'} P_u(x,x') \clW_i (x')  \Bigr\}   \,, \qquad 1\le i, k\le T.
\end{aligned}
\]\hfill \sq
\end{theorem}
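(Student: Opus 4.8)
The plan is to establish the theorem directly from the dynamic programming recursion \eqref{e:TCobjectiveReward_gen_PoO} together with the elementary convexity observation already highlighted in the text. First I would record the base case: $\clW^*_0(x,\zeta) = \max_u\{\zeta\util(x) - c(x,u)\} = \zeta\util(x)$, since $c\ge 0$ and $c(x,u_0)=0$; hence $\clW^*_0$ is affine in $\zeta$ with $\clW^*_0\big|_{\zeta=0} = 0$ and $\ddzetap\clW^*_0(\cdot,\zeta) = \util = \clV_0(\clW^*_\zeta)$, matching the claimed boundary condition and the formula for the zeroth block.

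Next I would treat the inductive step for $k\ge 1$. The key structural fact is that $\clW^*_\tau(x,\zeta)$ is, for each fixed $x$, a pointwise maximum over admissible policies of quantities of the form $\zeta\sum_t\Expect_x[\util(X(t))] - \sum_t\Expect_x[c(X(t),U(t))]$, each of which is affine in $\zeta$; therefore $\clW^*_\tau(x,\varble)$ is convex and (finite, hence) absolutely continuous, so its right derivative $H^*_\tau(x,\zeta)$ exists for every $\zeta\ge 0$. I would then differentiate \eqref{e:TCobjectiveReward_gen_PoO} from the right. Here one invokes the standard envelope/Danskin-type argument: at a value $\zeta$ where a maximizer $\phi^*_{\tau-1,\zeta}(x) =: u^*$ is attained, the right derivative of the max equals the partial derivative of the expression inside the braces evaluated at $u^*$ (the maximizing $u$ may jump but, by convexity, the right derivative picks out the maximizer that is "active to the right''), giving
\[
H^*_\tau(x,\zeta) = \util(x) + \sum_{x'} P_{u^*}(x,x')\, H^*_{\tau-1}(x',\zeta),
\]
which is exactly \eqref{e:TCobjectiveReward_ODEa}. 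Unwinding this recursion in $\tau$, with $\cP_{i,\zeta}(x,x') = P_{\phi^*_{i,\zeta}(x)}(x,x')$, yields $H^*_\tau = \cZ_{\tau-1,\zeta}\util$ with $\cZ$ the finite partial product \eqref{e:fund_matrix_finite}. Assembling the blocks $k=0,\dots,T$ into the single vector $\clW^*_\zeta\in\Re^{|\state|\times(T+1)}$ gives the ODE $\ddzetap\clW^*_\zeta = \clV(\clW^*_\zeta)$, where $\clV_k$ is defined by the displayed formula; the crucial point is that $\cP_{k-1,\zeta}$, hence each $P_i$ and each $Z_{k-1}$, depends on $\zeta$ only through the value function $\clW^*_\zeta$ itself (via the $\argmax$ defining $\phi_i$), so the right-hand side is genuinely a vector field on $\Re^{|\state|\times(T+1)}$ evaluated at $\clW^*_\zeta$.

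I expect the main obstacle to be the differentiation-through-the-maximum step, and in particular the handling of points $\zeta$ where the maximizer $\phi^*_{\tau-1,\zeta}(x)$ is non-unique or where the general action set $\U$ makes attainment delicate. Two things need care: (i) one must argue that a maximizer exists (assumed in the statement) and that, thanks to convexity of $\clW^*_\tau(x,\varble)$, the right derivative equals $\util(x) + \sum_{x'}P_{u^*}(x,x')H^*_{\tau-1}(x',\zeta)$ for the particular right-active maximizer $u^*$ — this is where the convexity established earlier is indispensable, since it rules out pathological oscillation of the derivative; and (ii) one must check that $H^*_{\tau-1}(x',\zeta)$ appearing inside is itself the right derivative, which follows from the induction hypothesis and the chain rule for the composition of the (right-)differentiable map $\zeta\mapsto\clW^*_{\tau-1}(\cdot,\zeta)$ with the linear functional $P_{u^*}$. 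Everything else — the base case, the matrix bookkeeping in \eqref{e:fund_matrix_finite}, and the block assembly into $\clV$ — is routine and follows by unwinding the recursion.
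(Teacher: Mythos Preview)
Your proposal is correct and follows essentially the same route as the paper, which presents the argument in the discussion immediately preceding the theorem (convexity from a pointwise maximum of affine functions, existence of the right derivative, differentiation of the dynamic programming recursion to obtain \eqref{e:TCobjectiveReward_ODEa}, and unwinding to \eqref{e:fund_matrix_finite}); your added care with the envelope/Danskin step is welcome, since the paper simply asserts \eqref{e:TCobjectiveReward_ODEa}. One minor point: the boundary condition $\clW^*_0 = 0$ in the theorem refers to $\zeta = 0$ for \emph{all} blocks $0\le k\le T$, not just $k=0$; the verification for $k\ge 1$ is identical to your base case (take $U(t)\equiv u_0$ so that $c\equiv 0$ and hence $\clW^*_k(\cdot,0)=0$).
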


The theorem provides valuable computational tools for models of moderate 
cardinality and moderate time-horizon.  
Two questions remain:
\begin{romannumx}
\item   What is $\phi_i$ for the problem under study in this paper?
\item  Can a tractable ODE be constructed in infinite-horizon optimal control problems?
\end{romannumx}
The answer to the second question is the focus of Section \ref{s:AROE}.  
The answer to (i) is contained in the following.   For any function $\clW \colon\state\to\Re$, denote
\[
R_\clW(x,\varble) = \argmax_R  \Bigl\{ \reward(x,R) +\sum_{x'} P(x,x') \clW(x') \Bigr\}\,,\quad x\in\state\, ,
\]
subject to the constraint that $P$ depends on $R$ via \eqref{e:PQ0R}, and with $\reward$ defined in \eqref{e:cKL}.

\begin{proposition}
\label{t:phiR}  
For any function $\clW $ the maximizer $R_\clW$ is unique and can be expressed
\[
R_\clW(x,x_u') = R_0(x,x_u')  \exp\bigl(  \clW(x'_u\mid  x)  -   \Lambda(x)    \bigr) 
\]
where  $\clW(x'_u\mid  x)  =  \sum_{x_n'} Q_0(x,x_n') \clW(x_u',x_n')$  for each $ x\in\state$, $x_u'\in\stateu$,
and $\Lambda(x)$ is a normalizing constant, defined so that $R_\clW(x,\varble)$ is a pmf for each $x$.
\end{proposition}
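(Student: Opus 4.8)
The plan is to fix $x\in\state$ and reduce the stated maximization to a finite-dimensional Gibbs variational problem over the probability simplex on $\stateu$, which can then be solved in closed form via the equality case of Gibbs' inequality. First I would note that $\zeta\util(x)$ plays no role, and that by the factorization \eqref{e:PQ0R} and the definition of $\cKL$ in \eqref{e:cKL},
\[
\reward(x,R)+\sum_{x'}P(x,x')\clW(x')
= \zeta\util(x) - \sum_{x'_u}R(x,x'_u)\log\frac{R(x,x'_u)}{R_0(x,x'_u)} + \sum_{x'_u}R(x,x'_u)\,\clW(x'_u\mid x),
\]
where the last term uses $\sum_{x'}P(x,x')\clW(x') = \sum_{x'_u}R(x,x'_u)\sum_{x'_n}Q_0(x,x'_n)\clW(x'_u,x'_n) = \sum_{x'_u}R(x,x'_u)\clW(x'_u\mid x)$. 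Hence it suffices to maximize $F(q)\eqdef -D(q\|R_0(x,\varble)) + \sum_{x'_u}q(x'_u)\clW(x'_u\mid x)$ over pmfs $q$ on $\stateu$; only $q$ supported within the support of $R_0(x,\varble)$ need be considered, the rest giving $F=-\infty$.

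Next I would introduce $\Lambda(x)\eqdef\log\bigl(\sum_{x'_u}R_0(x,x'_u)\exp(\clW(x'_u\mid x))\bigr)$ and the pmf $q^\star(x'_u)\eqdef R_0(x,x'_u)\exp(\clW(x'_u\mid x)-\Lambda(x))$, which is exactly the claimed $R_\clW(x,\varble)$. Substituting this definition into $D(q\|q^\star)$ and expanding the logarithm gives, for every admissible $q$,
\[
D(q\|q^\star)=\sum_{x'_u}q(x'_u)\log\frac{q(x'_u)}{R_0(x,x'_u)} - \sum_{x'_u}q(x'_u)\clW(x'_u\mid x) + \Lambda(x) = \Lambda(x) - F(q).
\]
Therefore $F(q)=\Lambda(x)-D(q\|q^\star)$. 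Since relative entropy is nonnegative and vanishes if and only if its two arguments coincide, $F$ is maximized precisely at $q=q^\star$, with value $\Lambda(x)$; repeating this for each $x$ yields the proposition, including the asserted uniqueness of the maximizer.

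I do not expect a substantive obstacle: the argument is the standard convex-duality (Donsker--Varadhan) computation already used implicitly in \cite{tod07}. The only points requiring care are bookkeeping on the effective domain --- finiteness of $\cKL(x,R)$ forces the support of $R(x,\varble)$ to lie inside that of $R_0(x,\varble)$, a property shared by $R_\clW$, so that $D(q\|q^\star)$ is always well defined --- and invoking the strict equality case of Gibbs' inequality to conclude that the $\argmax$ is a singleton rather than merely nonempty. A Lagrange-multiplier treatment of the normalization constraint $\sum_{x'_u}R(x,x'_u)=1$ reproduces the same formula, with $\Lambda(x)$ identified as one plus the multiplier, but the displayed identity is cleaner and delivers global optimality and uniqueness simultaneously.
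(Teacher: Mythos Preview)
Your proof is correct and follows essentially the same route as the paper: both fix $x$, reduce to the Gibbs variational problem $\argmax_\mu\{\mu(\clW(\varble\mid x)) - D(\mu\|R_0(x,\varble))\}$, and then identify the maximizer as the exponential tilt. The only difference is that the paper dispatches the last step by citing the Donsker--Varadhan duality (Theorem~3.1.2 of Dembo--Zeitouni), whereas you write out the key identity $F(q)=\Lambda(x)-D(q\|q^\star)$ directly; your version is more self-contained and makes the uniqueness via the equality case of Gibbs' inequality explicit, but the underlying argument is the same.
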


%

\begin{proof}
 Given the form of the reward $\reward$ and the constraint on $P$, 
the optimization problem of interest here can be written, for each $x$, as
 \[
R_\clW (x,\varble) = \argmax_\mu  \bigl\{   \mu(\haclW  ) - D(\mu\| \mu_0)  \bigr\}
\]
where the variable $\mu(\varble) $ represents $R(x,\varble)$,   $\mu_0=R_0(x,\varble)$, and
\[
\mu(\haclW  ) =  \sum_{x' = (x'_u,x'_n)} R(x,x'_u)Q_0(x,x'_n) \clW(x'_u,x'_n) = \sum_{x'_u} \mu(x'_u) \clW(x'_u\mid  x) 
\]
The proposition is a consequence of this combined with   Theorem 3.1.2  of  \cite{demzei98a}  (i.e., convex duality between relative entropy and the log moment generating function).
\end{proof}

It follows from the proposition that the vector field is smooth in a neighborhood of the optimal solution $\{\clW^*_\zeta : \zeta\ge 0\}$.   These results are central to the average-reward case considered next.

\subsection{Average reward formulation}
\label{s:AROE}

We consider now the case of average reward \eqref{e:ACobjective_NN}, subject to the structural constraint \eqref{e:PQ0R}.    The associated average reward optimization equation (AROE) is expressed as follows: 
\begin{equation}
\max_R\Bigl\{
\reward(x,R)
+\sum_{x'} P(x,x') h^*_\zeta(x') \Bigr\} = h^*_\zeta(x) + \eta^*(\zeta)
\label{e:AROE}
\end{equation}
 In which   $\eta^*(\zeta)$ is the optimal average reward, and $h^*_\zeta$ is the \textit{relative value function}.  The maximizer defines a transition matrix:
\begin{equation}
\cP_\zeta =\argmax_P \bigl \{\zeta \pi(\util) -  K(P\| P_0)  : \pi   P  = \pi  \bigr\}
\label{e:ARobjective}
\end{equation}
Recall that the relative value function is not unique, since  a new solution is obtained by adding a non-zero constant;  the normalization  $h^*_\zeta(\xz)=0$ is imposed, where $\xz\in\state$ is a fixed state.

   The proof of \Theorem{t:IPD}~(i) is a consequence of \Prop{t:phiR}.  The second result is obtained on combining   Lemmas~B.2--B.4 of   \cite{busmey18a}.
\begin{theorem}
\label{t:IPD}
There exist optimizers $\{\cpi_\zeta, \cP_\zeta : \zeta\in\Re\}$,  and solutions to
the AROE    $\{h^*_\zeta,\eta^*(\zeta): \zeta\in\Re\}$ with the following properties: 
\begin{romannumx}

\item  
The optimizer $\cP_\zeta$ can be obtained from the relative value function $h^*_\zeta$ as follows:  
\begin{equation}
\cP_\zeta(x,x')
\eqdef P_0(x,x')\exp\bigl(  h_\zeta(x'_u\mid  x)  -  \EFn{h_\zeta}(x)    \bigr)
\label{e:Pzeta-a}
\end{equation}
where  for $ 
x\in\state$, $x_u'\in\stateu$,
\begin{equation}
 h_\zeta(x'_u\mid  x)  =  \sum_{x_n'} Q_0(x,x_n') h^*_\zeta(x_u',x_n'),  
\label{e:hmidzeta}
\end{equation}
and $\EFn{h_\zeta}(x) $ is the normalizing constant \eqref{e:EFn} with   $h=h_\zeta$.

\item
$\{\cpi_\zeta, \cP_\zeta, h^*_\zeta,\eta^*(\zeta): \zeta\in\Re\}$ are continuously differentiable in the parameter $\zeta$.
\qed
\end{romannumx}
\end{theorem}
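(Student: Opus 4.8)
The plan is to handle the two parts by quite different means: (i) is essentially a corollary of \Prop{t:phiR}, while (ii) requires an implicit-function-theorem argument applied to a reduced scalar form of the AROE. For (i), I would take $\clW = h^*_\zeta$ in \Prop{t:phiR}. Its proof already establishes that, for any $\clW\colon\state\to\Re$ and any $x$,
\[
\max_R\Bigl\{\reward(x,R) + \sum_{x'}P(x,x')\clW(x')\Bigr\} = \zeta\util(x) + \EFn{\clW}(x),
\]
where I use that $\sum_{x'}P_0(x,x')\exp(\clW(x_u'\mid x)) = \sum_{x_u'}R_0(x,x_u')\exp(\clW(x_u'\mid x))$ because $\clW(\,\cdot\mid x)$ is independent of $x_n'$ and $\sum_{x_n'}Q_0(x,x_n')=1$, and that the maximizing rule is the unique $R_\clW$. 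Comparing with the AROE \eqref{e:AROE} at $\clW = h^*_\zeta$ then shows that the AROE is equivalent to the scalar system $h^*_\zeta(x) + \eta^*(\zeta) = \zeta\util(x) + \EFn{h^*_\zeta}(x)$, $x\in\state$, and that the optimal rule is $R_{h^*_\zeta}$; multiplying by $Q_0(x,x_n')$ and invoking \eqref{e:PQ0R} then yields \eqref{e:Pzeta-a}. This part is bookkeeping.

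For (ii) I would first dispatch the cheap half: $\eta^*_\zeta$ is a supremum of affine functions of $\zeta$ by \eqref{e:ARobjectiveReward}, hence convex, hence differentiable off a countable set with $\ddzeta\eta^*(\zeta)=\cpi_\zeta(\util)$ wherever it is differentiable. Promoting this to genuine $C^1$ regularity of the whole tuple is the real work, and I would do it locally via the implicit function theorem. Define $G\colon\Re^{d}\times\Re\times\Re\to\Re^{d+1}$ by $G_x(h,\eta,\zeta) = h(x) + \eta - \zeta\util(x) - \EFn{h}(x)$ for $x\in\state$ (with $\EFn{h}$ the normalization \eqref{e:EFn} of the log-likelihood induced by $h$ via \eqref{e:hmid}) and $G'_{\xz}(h,\eta,\zeta) = h(\xz)$; by (i) the zeros of $G$ are exactly the normalized AROE solutions, and $G$ is real-analytic. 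The computation I would rely on is $\partial\EFn{h}(x)/\partial h(x_u',x_n') = \cP_\zeta(x,(x_u',x_n'))$ at $h=h^*_\zeta$, so that the Jacobian of $G$ in $(h,\eta)$ at a solution is
\[
J = \begin{pmatrix} I - \cP_\zeta & \mathbf 1 \\ e_{\xz}^{\transpose} & 0 \end{pmatrix}.
\]

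Next I would show $J$ is nonsingular: if $(I-\cP_\zeta)v = -s\mathbf 1$ and $v(\xz)=0$, left-multiplying by $\cpi_\zeta$ forces $s=0$ (since $\cpi_\zeta\mathbf 1=1$ and $\cpi_\zeta(I-\cP_\zeta)=0$), hence $v$ is a harmonic function of the finite irreducible chain $\cP_\zeta$, hence constant, hence $v\equiv 0$. Irreducibility and aperiodicity of $\cP_\zeta$ are free: by \eqref{e:Pzeta-a} it has exactly the support of $P_0$, the exponential factor being strictly positive and finite since $\state$ is finite and $h^*_\zeta$ real-valued. The implicit function theorem then produces, near each fixed $\zeta_0$, a real-analytic curve of solutions through $(h^*_{\zeta_0},\eta^*(\zeta_0))$; a priori existence of an AROE solution for every $\zeta$ together with uniqueness of $h^*_\zeta$ under the normalization $h^*_\zeta(\xz)=0$ — standard for average-reward MDPs once the optimal chains are known to be irreducible, and the content I would import from Lemmas~B.2--B.4 of \cite{busmey18a} — lets me identify that curve with $\zeta\mapsto(h^*_\zeta,\eta^*(\zeta))$, which is therefore $C^1$ (indeed $C^\infty$) on $\Re$. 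Then $\cP_\zeta$ is smooth by \eqref{e:Pzeta-a}, and $\cpi_\zeta$ is smooth because the invariant pmf of an irreducible stochastic matrix depends rationally on its entries via the fundamental matrix \eqref{e:fundKernGen}.

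The step I expect to be the main obstacle is nonsingularity of $J$ — equivalently, ruling out a degenerate linearization of the reduced AROE at its solution — since this is precisely where the two structural facts must be combined: the derivative identity $\partial\EFn{h}/\partial h = \cP_\zeta$ (a consequence of \Prop{t:phiR}) and irreducibility of $\cP_\zeta$ (from the support pattern in \eqref{e:Pzeta-a}). The only other nontrivial input used as a black box is a priori existence and uniqueness of AROE solutions for all $\zeta$ on the noncompact randomized-action set, which is classical but itself leans on the same irreducibility observation.
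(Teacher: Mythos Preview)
Your argument is correct. Part~(i) is exactly the paper's route: the paper says only that (i) ``is a consequence of \Prop{t:phiR}'', and you have spelled out precisely how, arriving at the reduced scalar form $h^*_\zeta + \eta^*(\zeta) = \zeta\util + \EFn{h^*_\zeta}$ of the AROE that the paper itself invokes later in the proof of \Theorem{t:IDPODE}.

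For part~(ii) the paper gives no argument in the body at all --- it defers entirely to Lemmas~B.2--B.4 of \cite{busmey18a} --- so there is nothing in the paper proper to compare against. Your implicit-function-theorem argument is sound: the derivative identity $\partial\EFn{h}(x)/\partial h(y) = P_h(x,y)$ is the standard log-partition computation (and checks out under the conditional-independence structure \eqref{e:PQ0R}), the Jacobian $J$ is correct, and the nonsingularity step via $\cpi_\zeta(I-\cP_\zeta)=0$ together with irreducibility of $\cP_\zeta$ (same support as $P_0$) is the right mechanism. The global patching via a priori existence and uniqueness of the normalized AROE solution is the natural way to promote the local IFT conclusion, and you correctly flag that this is the one genuinely external ingredient you still need from \cite{busmey18a}. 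In short, you have reconstructed in detail what the paper outsources; your version is more explicit than the paper's, not different from it.
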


Representations for the derivatives in \Theorem{t:IPD}~(ii),  in particular the derivative of $\EFn{h_\zeta^*}$ with respect to $\zeta$,  lead to a representation for the ODE   used to compute the    transition matrices $\{\cP_\zeta\}$.

It is convenient to generalize the problem slightly here:  let $\{h_\zeta^\circ : \zeta\in\Re\}$ denote a family of functions on $\state$, continuously differentiable in the parameter $\zeta$.  They are not necessarily relative value functions, but we maintain the structure established in  \Theorem{t:IPD} for the  family of transition matrices.  Denote,
\begin{equation}
h_\zeta (x'_u\mid  x)   =  \sum_{x_n'} Q_0(x,x_n') h^\circ_\zeta(x_u',x_n'),  
\quad 
x\in\state,\ x_u'\in\stateu
\label{e:hmidcirc}
\end{equation}
and then define as in \eqref{e:Ph-a},
\begin{equation}
 P_\zeta(x,x')
\eqdef P_0(x,x')\exp\bigl(  h_\zeta(x'_u\mid  x )  -  \EFn{h_\zeta}(x)    \bigr)
\label{e:Pzeta}
\end{equation} 
The function   $\EFn{h_\zeta}\colon\state\to\Re$ is a normalizing constant, exactly as in \eqref{e:EFn}:
\[
    \EFn{h_\zeta^\circ}(x)     
\eqdef  \log\Bigl( \sum_{x'} P_0(x,x')\exp\bigl(  h_\zeta(x'_u\mid  x)     \bigr) \Bigr)
\]


We begin with a general method to construct  a family of functions $\{h_\zeta^\circ : \zeta\in\Re\}$ based on an ODE.     The ODE is expressed,  
\begin{equation}
\ddzeta h_\zeta^\circ  =  \clV(h_\zeta^\circ)\,,\qquad \zeta\in\Re,
\label{e:hODE}
\end{equation}
with boundary condition $h_0^\circ\equiv 0$.  A particular instance of the method will result in $h_\zeta^\circ = h_\zeta^*$ for each $\zeta$.
 Assumed given is a mapping $\preclH$ from transition matrices to functions on $\state$.   Following this, the vector field $\clV$ is obtained through the following two steps:  For a function $h\colon\state\to\Re$,  
\begin{romannumx}
\item Define a new transition matrix via \eqref{e:Ph-a},
\begin{equation}
P_h(x,x') \eqdef P_0(x,x')\exp\bigl(  h(x_u' \mid x)  -  \EFn{h}(x)    \bigr),\quad x,x'\in\state,
\label{e:Ph}
\end{equation}
in which $  h(x_u' \mid x) = \sum_{x_n'} Q_0(x,x_n') h(x_u',x_n')$,  and $ \EFn{h}(x)  $ is a normalizing constant.

\item  Compute $\preH = \preclH(P_h)$, and define $\clV(h) = \preH$.
It is assumed   that the functional $\preclH$ is constructed so that  $ \preH(\xz)=0$ for any $h$.

\end{romannumx}


 
We now specify the functional  $\preclH$, whose domain consists of  transition matrices that are irreducible and aperiodic.    For  any transition matrix  $P$ in this domain, the fundamental matrix $Z$ is obtained using \eqref{e:fundKernGen}, and then $\preH=\preclH(P)$ is defined as
\begin{equation}
\preH(x) = \sum_{x'} [ Z(x,x')-Z(\xz,x') ] \util (x'),\qquad x\in\state
\label{e:fishP}
\end{equation}
The function $\preH$   is a solution to Poisson's equation,
\begin{equation}
P \preH =\preH -\util +\meanutil\,, \qquad \text{where $  \meanutil  \eqdef  \pi(\util)  \eqdef \sum_x\pi(x) \util(x)$.
}
\label{e:fish}
\end{equation}

\begin{theorem}
\label{t:IDPODE}
Consider the ODE 
\eqref{e:hODE}
with boundary condition $h_0^\circ\equiv 0$,
and with  $\preH=\preclH(P)$ defined using \eqref{e:fishP} for each transition matrix $P$
that is irreducible and aperiodic.
The solution to this ODE exists, and the resulting functions $\{ h^\circ_\zeta : \zeta\in\Re\}$    
coincide with the relative value functions $\{h^*_\zeta: \zeta\in\Re\}$.  Consequently,   $\cP_\zeta = P_{h_\zeta}$ for each $\zeta$.
\end{theorem}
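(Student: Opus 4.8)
The plan is to verify that the family of relative value functions $\{h^*_\zeta:\zeta\in\Re\}$ furnished by \Theorem{t:IPD} is itself a solution of the ODE \eqref{e:hODE} with $h^\circ_0\equiv 0$, and then to invoke uniqueness of ODE solutions to conclude that $h^\circ_\zeta=h^*_\zeta$ for every $\zeta$; the final identity $\cP_\zeta=P_{h_\zeta}$ is then immediate from \Theorem{t:IPD}(i).

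First I would rewrite the AROE \eqref{e:AROE} in closed form. Carrying out the inner maximization over $R$ exactly as in the proof of \Prop{t:phiR} — by the convex duality of Theorem~3.1.2 of \cite{demzei98a} between relative entropy and the log-moment-generating function — the AROE collapses to the scalar identity
\[
\EFn{h^*_\zeta}(x)\;=\;h^*_\zeta(x)+\eta^*(\zeta)-\zeta\util(x)\,,\qquad x\in\state,\ \zeta\in\Re,
\]
where $\EFn{h^*_\zeta}$ is the normalizing constant of \eqref{e:EFn}. Because $\zeta\mapsto h^*_\zeta$ and $\zeta\mapsto\eta^*(\zeta)$ are continuously differentiable (\Theorem{t:IPD}(ii)) and $\EFn{\cdot}$ is an analytic function of its argument, I may differentiate this identity in $\zeta$. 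Writing $\psi_\zeta\eqdef \ddzeta h^*_\zeta$ and noting that the gradient of $h\mapsto\EFn{h}(x)$ is precisely the row $\cP_\zeta(x,\cdot)$ of the twisted kernel \eqref{e:Pzeta-a}, a short computation using \eqref{e:hmidzeta} gives $\ddzeta\EFn{h^*_\zeta}(x)=\sum_{x'}\cP_\zeta(x,x')\psi_\zeta(x')$, so the differentiated identity reads $\cP_\zeta\psi_\zeta=\psi_\zeta-\util+\ddzeta\eta^*(\zeta)$. Multiplying on the left by the invariant pmf $\cpi_\zeta$ and using $\cpi_\zeta\cP_\zeta=\cpi_\zeta$ pins down the constant, $\ddzeta\eta^*(\zeta)=\cpi_\zeta(\util)=\meanutil$; hence $\psi_\zeta$ is a solution of Poisson's equation \eqref{e:fish} for $P=\cP_\zeta$.

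The functional $\preclH$ is, by construction \eqref{e:fishP}, the map sending an irreducible aperiodic $P$ to the \emph{unique} solution $\preH$ of that Poisson equation normalized by $\preH(\xz)=0$. Since $h^*_\zeta(\xz)=0$ for all $\zeta$, differentiation gives $\psi_\zeta(\xz)=0$, so $\psi_\zeta=\preclH(\cP_\zeta)$. Because $\cP_\zeta=P_{h^*_\zeta}$ by \Theorem{t:IPD}(i), the two-step definition of the vector field gives $\psi_\zeta=\preclH(P_{h^*_\zeta})=\clV(h^*_\zeta)$; that is, $\ddzeta h^*_\zeta=\clV(h^*_\zeta)$. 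The boundary value is read off from the closed-form AROE at $\zeta=0$: taking $h^*_0\equiv 0$ satisfies it (with $\eta^*(0)=0$), and uniqueness of the normalized relative value function forces $h^*_0\equiv 0$. This exhibits $\{h^*_\zeta\}$ as a solution of the ODE, proving existence.

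For uniqueness I would observe that $\clV$ is smooth on the open set of functions $h$ for which $P_h$ is irreducible and aperiodic — $P_h$ has the same zero pattern as $P_0$, and $P\mapsto Z=[I-P+1\otimes\pi]^{-1}$, hence $\preclH$, depends smoothly on $P$ there — and that this set contains the entire $C^1$ curve $\{h^*_\zeta:\zeta\in\Re\}$. Picard--Lindel\"of applied in a tube around this curve yields local, and then global, uniqueness of the solution with $h^\circ_0\equiv 0$, so $h^\circ_\zeta=h^*_\zeta$ for all $\zeta$ and $\cP_\zeta=P_{h_\zeta}$. I expect the delicate point to be the term-by-term differentiation of the dynamic-programming identity and the accompanying bookkeeping that makes Poisson's equation emerge with exactly the right forcing and constant; this rests on the uniqueness and smoothness of the maximizer $R_\clW$ from \Prop{t:phiR} together with the regularity granted by \Theorem{t:IPD}(ii), after which the linear-algebra identification of $\ddzeta\eta^*(\zeta)$ and the appeal to ODE uniqueness are routine.
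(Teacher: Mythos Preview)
Your proposal is correct and follows essentially the same route as the paper: collapse the AROE via \Prop{t:phiR} to the fixed-point identity $\zeta\util+\EFn{h^*_\zeta}=h^*_\zeta+\eta^*(\zeta)$, differentiate in $\zeta$ to obtain Poisson's equation $\util+\cP_\zeta H^*_\zeta=H^*_\zeta+\ddzeta\eta^*(\zeta)$, and use the normalization $h^*_\zeta(\xz)=0$ together with uniqueness of the normalized Poisson solution to identify $\ddzeta h^*_\zeta=\preclH(\cP_\zeta)=\clV(h^*_\zeta)$. You are in fact more explicit than the paper on two points it leaves tacit: the verification of the boundary condition $h^*_0\equiv 0$ and the appeal to smoothness of $\clV$ plus Picard--Lindel\"of for uniqueness of the ODE solution.
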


\begin{proof}
The proof requires validation of the representation $  H^*_\zeta = \preclH(\cP_\zeta)$ for each $\zeta$, where $h^*_\zeta$ is the relative value function, 
$\cP_\zeta$ is defined in \eqref{e:ARobjective}, and
\begin{equation}
  H^*_\zeta =\ddzeta h^*_\zeta 
\label{e:Hrep}
\end{equation} 
Substituting the maximizer $\cP_\zeta$ in the form \eqref{e:Pzeta-a} into the AROE gives the fixed point equation $ \zeta \util   + \EFn{h^*_\zeta} = h^*_\zeta + \eta^*(\zeta) $.
%
Differentiating each side   then gives,
\begin{equation}
   \util   +  \cP_\zeta H^*_\zeta= H^*_\zeta + \ddzeta \eta^*(\zeta) .
\label{e:fisheta}
\end{equation}
This is Poisson's equation, and it follows that $\cpi_\zeta(\util) =  \ddzeta \eta^*(\zeta) $.  Moreover,  since $h^*_\zeta(\xz)=0$ for every $\zeta$,  we must have $H^*_\zeta(\xz)=0$ as well.  Since the solution to Poisson's equation with this normalization is unique, we conclude that  \eqref{e:Hrep} holds, and hence $H^*_\zeta = \preclH(\cP_\zeta)$ as claimed.  
\end{proof}

\def\Ltarget{l^{\bullet}}

\def\stateL{{\sf X}^L}
\def\wind{\omega}

\section{Example} 
\label{s:drone}
 
We consider a variant of the example of  \cite{alsgonsmi13} in which a UAV 
(unmanned aerial vehicle) 
 needs to reach a target subject to energy costs, and subject to disturbances from wind.   The location of the UAV at time $t$ is denoted $L_t$, and evolves according to the controlled linear dynamics:
\begin{equation}
L_{t+1} = L_t + W_t + U_t  
\label{e:modelLnat}
\end{equation}
where $\bfmU=\{U_t\}$ is the control sequence, and  $\bfmW=\{W_t\}$  models the impact of the wind. There are $d_L$ locations across a two-dimensional grid. 

Wind is location-dependent: It is assumed that the  wind profile over the region is determined by a stochastic process $\bfmN =\{ N_t\} $ and a function $\wind$ such that for each $t$,
\[
W_t=\wind(L_t, N_t ).
\]
The process $\bfmN$  is assumed to be Markovian with finite state space $  \{1,\dots,d_N\}$, and state transition matrix denoted $Q_0$.  This is the
 nature component of the MDP model,  with state process $X_t=(L_t,N_t)$,  $t\ge 0$.

A nominal model is described by a randomized policy in which $U_t=0$ with high probability.   The specific form used in the experiments 
was constructed as follows.   On denoting  $  L^+_t = L_t + W_t$, a transition matrix $R_0^L$ is constructed with the interpretation
\[
R_0^L(l^+, l')  = \Prob\{ L_{t+1} =  l' \mid L^+_t  = l^+ \}\,,\quad t\ge 0\, .
\]
The nominal randomized strategy is the $d_L\times d_L$ matrix,
\[
R_0(x, u)  = \Prob\{ U_t = u\mid X_t  = x \} = R_0^L(l+\wind(l,n) , l+\wind(l,n)+u ) ,   \qquad x=(l,n)\, .
\]
The overall transition matrix is the product:
\[
P_0(x,x') = R_0^L(l+w(n,l), l') Q_0(n,n'),\qquad x=(n,l),\ x'=(n',l')\, .
\]

 \begin{figure}[h]
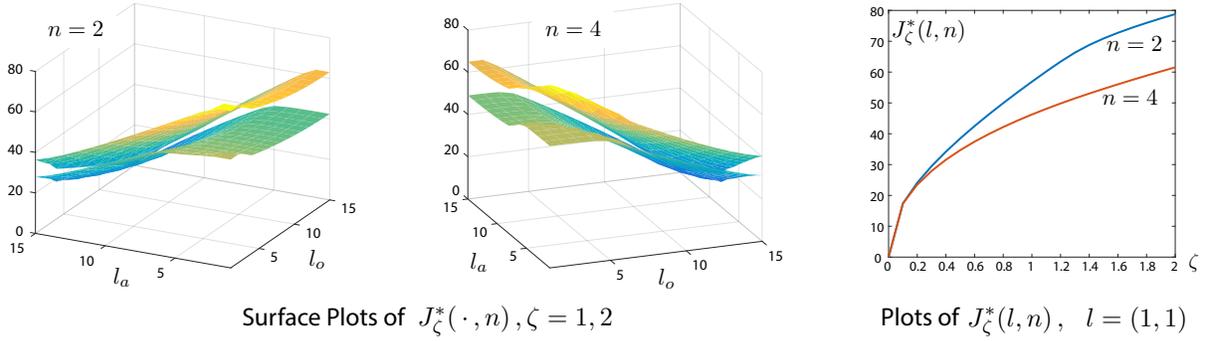

\Ebox{.95}{CostToGo15x15n2n4+Siberia.pdf} 
\vspace{-3.5ex}
\caption{\small Cost to go for two values of the initial value $n=N_0$,  $n =2,4$.   Each surface plot indicates values of $J^*_\zeta(\varble,n)$ for $\zeta=1$ and $\zeta=2$.  The one of larger magnitude corresponds to $\zeta=2$.    The plot at the right shows $J^*_\zeta(l,n)$ as a function of $\zeta$ for these values of $n$,  and $l=(1,1)$. }
\label{f:CostToGo15x15n2n4}
\vspace{-1.5ex}
\end{figure}

The goal of the control problem is to reach a target location $\Ltarget$ and remain there.  To ensure that the set $\{ (\Ltarget,n) :  1\le n\le d_N\}$ is absorbing, a separate rule is imposed on $R_0$ for these states:   $W_t + U_t  =0$ if $L_t = \Ltarget$.

The reward function $\util$ is taken to be a scaled negative cost:  $ \util = -  c$,  where $c\colon\stateL\to\Re_+$,
with $c(\Ltarget)=0$ and $c(l)>0$ for $l\neq \Ltarget$.    The optimal steady-state mean is zero in this model, and the relative value function is the negative of the cost to go:
\begin{equation}
-h^*(x) = 
J^*(x) \eqdef \min\Expect_x\Bigl[\sum_{t=0}^{\tau_\bullet} 
	\bigl\{\zeta c(L_t) +\cKL(X_t,R(t) ) \bigr \} \Bigr]
\label{e:RoverCTG}
\end{equation}
where $\tau_\bullet$  (unknown a-priori) is the first hitting time to $\Ltarget$.   An example is illustrated in \Fig{f:CostToGo15x15n2n4} --- the details are provided in the following.

 \begin{figure}[h]
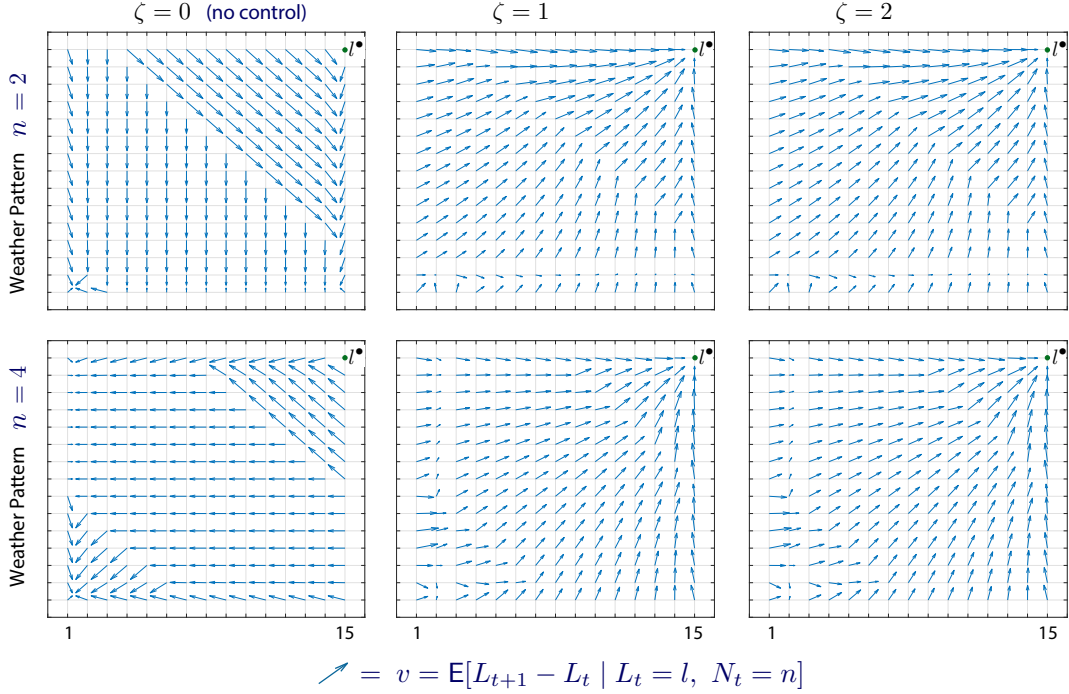

\Ebox{0.85}{Quiver15by15n2n4.pdf} 
\vspace{-3ex}
\caption{\small Vector field $v(l,n)$ for two values of $n$, and $\zeta=0,1,2$:  see eqn.~\eqref{e:velocity}}
\label{f:Quiver15by15n2n4.pdf}
\vspace{-.25ex}
\end{figure}

\paragraph{Details of the numerical experiment}

The  set of locations $\stateL$ is taken to be a rectangular grid of the form $\stateL = \{ (i,j) : 1\le i\le d_a,\  1\le j \le d_o\}$, in which   $d_a, d_o\ge 2$ and $d_L=d_a\times d_o$  (the subscripts are  meant to suggest latitude and longitude).  The function $c$ appearing in \eqref{e:RoverCTG} was taken to be the indicator function,  $c(l)=\ind\{l\neq \Ltarget\}$,  with $\Ltarget=(d_a,d_o)$.

The values  $d_a=d_o=15$, and $d_N=5$ are fixed throughout.
The size of the state space is thus $d_a\times d_o\times d_N= 1,125$,  and the action space is a subset of the simplex in $\Re^{1,125}$.

The transition matrix for nominal control was taken   of the following form:
\[
R_0^L(l, l')   = \kappa(l)   \exp\Bigl\{-\frac{1}{2\sigma^2_u} \| l'-l\|^2 \Bigr\}  ,\quad l,l'\in\stateL\,,
\]
where $\kappa(l) >0$ is chosen so that $R_0^L(l, \varble)$ is a pmf on $\stateL$ for each $l\in\stateL$.   The value $\sigma^2_u = 1/2$ was used in the numerical results that follow.

The Markov chain $\bfmN$ was taken to be a skip-free symmetric random walk on the integers $\{ 1,\dots,d_N\}$.   For a fixed $\delta_n\in(0,1)$ the probability of transition is $Q_0(n,n+ 1) =Q_0(n,n-1) =\half \delta_n $,
where addition is modulo $d_N$,  
and $Q_0(n,n) =1-\delta_n$ for any $n$.
Recall that this means
\[ 
\Prob\{N_{t+1} = n+1 \mid N_t = n\}
=
\Prob\{N_{t+1} = n-1 \mid N_t = n\} =  \half \delta_n. 
\]
The value $\delta_n=0.05$ was chosen in these experiments.

Recall that $\wind\colon \stateL\to \ZZ^2$ is used to defined the wind process $\bfmW$.  
For each value of $n$,  the function $\wind(\varble,n)$ can be interpreted as a vector field on $\stateL$.   For each $n$,  a slowly varying continuous function was constructed on the two-dimensional rectangle $[1,d_a]\times[1,d_o]$. The function $\wind(\varble,n)$ was  taken to be its quantization to the lattice $\stateL$.
 The values were restricted to the set of pairs $\{(i,j): |i|\le 1,\ |j|\le 1\}$.

The family of optimal policies was obtained using the ODE method,   and the solution for three values of $\zeta $ is illustrated in  \Fig{f:Quiver15by15n2n4.pdf}.    Each of the arrows shown is proportional to  the conditional expectation:
\begin{equation}
v(l,n) \eqdef 
\Expect[L_{t+1} - L_t \mid L_t = l\, \ N_t=n]
\label{e:velocity}
\end{equation}
in which $l\in\stateL$ is the position on the grid.  The figure shows only the values $n=2$ and $n=4$ (the most interesting to view because of obvious spatial variability). 


 \begin{figure}[h]
\vspace{-2ex}
\Ebox{0.4}{EigenvalueComparison15x15small.pdf} 
\vspace{-2ex}
\caption{\small Eigenvalues of $\cP_\zeta$ }
\label{f:EigenvalueComparison15x15}
\vspace{.25ex}
\end{figure}

 
If the position $l=(l_a,l_o)$  is far from the boundary of $\stateL$, say,  $\min(l_a,l_o)\ge 4$ and  $\min(d_a-l_a,d_o-l_o)\ge 4$, then
\[
\text{  $\Expect[U_t \mid L_t = l\, \ N_t=n]\approx 0$ and 
$
v(l,n) \approx \wind(l,n)$,  \quad $\zeta=0$}
\]

For the case $\zeta=1$ the vector field is transformed so that vectors near the target state point in this direction; for $\zeta=2$ this behavior is  more apparent.   For states far from the target the control effort seems to be lower -- most likely the optimal policy waits for more favorable weather that will push the UAV in the North-East direction.


The eigenvalues of $\cP_\zeta$ are shown in \Fig{f:EigenvalueComparison15x15}
for $\zeta=0,1,2$.    Most of the eigenvalues are driven near zero for $\zeta=2$. 
Those three that are independent of $\zeta$ are  the three eigenvalues of $Q_0$,
$\{0.9095, 
    0.9655, 1   \}$.

While the vector field and eigenvalues change significantly when $\zeta$ is doubled from $1$ to $2$, the cost to go  $J^*$ defined in \eqref{e:RoverCTG} grows relatively slowly with $\zeta$.  Shown on the right hand side of \Fig{f:CostToGo15x15n2n4} are comparisons for these two values of $\zeta$.  One plot with $n=2$ and the other $n=4$.
The plot on the far right shows $J_\zeta^*(l,\zeta)$ for $0\le\zeta\le 1$ and $l=(1,1)$  (the location farthest from $\Ltarget$).   

\textit{These plots are easily obtained because of the nature of the algorithm:}   the optimal policy and value function are generated for any range of $\zeta$ of interest.

%

\section{Conclusions} 
\label{s:conc}

The ODE approach for solving MDPs has  simple structure for the class of models considered in this paper.  We are currently looking at approaches to approximate dynamic programming as has been successful in the unconstrained model  \cite{tod09}.

It is likely that the ODE has special structure for other classes of MDPs, such as the  ``rational inattention'' framework of \cite{sim06,sharagmey16}.  The computational efficiency of this approach  will depend in part on numerical properties of the ODE, such as its sensitivity for complex models.   Applications to distributed control were the original motivation for this work, with particular attention to ``demand dispatch'' \cite{chehasmatbusmey17}.  It is believed that this paper will offer  new computational tools in this ongoing research.

\bibliographystyle{abbrv}

\bibliography{strings,KL_colt18}

\end{document}